\documentclass[12pt]{amsart}
\usepackage[active]{srcltx}
\usepackage{a4wide}
\usepackage{amsthm,amsfonts,amsmath,mathrsfs,amssymb}
\usepackage{dsfont}
\usepackage[T1]{fontenc}
\usepackage[utf8]{inputenc}
\usepackage{fourier}
\usepackage{pdfpages}
\usepackage{graphicx}

\def\a{\alpha}               \def\g{\gamma}
            \def\om{\omega}
       \def\t{\theta}       
         \def\r{\rho}         \def\z{\zeta}

\def\e{\varepsilon}

\def\D{{\mathbb D}}

\def\({\left(}       \def\){\right)}

\newtheorem{prop}{\sc Proposition}

\newtheorem{thm}[prop]{\sc Theorem}

\newtheorem{ex}[prop]{\sc Example}


\begin{document}
\title[Korenblum's principle for Bergman spaces with radial weights]{Korenblum's principle for Bergman spaces with radial weights}
\author[I. Efraimidis]{Iason Efraimidis}
\address{Departamento de Matem\'aticas, Universidad Aut\'onoma de
Madrid, 28049 Madrid, Spain}
\email{iason.efraimidis@uam.es}
\author[A. Llinares]{Adri\'an Llinares}
\address{Department of Mathematics and Mathematical Statistics,
Ume{\char229} University, 90736 Ume{\char229}, Sweden. Current address: Departamento de An\'alisis Matem\'atico y Matem\'atica Aplicada, Universidad Complutense de Madrid, 28040 Madrid, Spain}
\email{adrialli@ucm.es}
\author[D. Vukoti\'c]{Dragan Vukoti\'c}
\address{Departamento de Matem\'aticas, Universidad Aut\'onoma de
Madrid, 28049 Madrid, Spain} \email{dragan.vukotic@uam.es}
\dedicatory{Peter Duren (1935-2020) in memoriam}
\thanks{All authors are partially supported by PID2019-106870GB-I00 from MICINN, Spain. The first author is supported by a María Zambrano contract, reference number CA3/RSUE/2021-00386, from UAM and Ministerio de Universidades, Spain (Plan de Recuperación, Transformación y Resiliencia). Second author's work is funded by the postdoctoral scholarship JCK22-0052 granted by The Kempe Foundations}
\subjclass[2010]{}
\keywords{Weighted Bergman space, domination}
\date{05 April, 2024.}

\begin{abstract}
We show that the Korenblum maximum (domination) principle is valid for weighted Bergman spaces $A^p_w$ with arbitrary (non-negative and integrable) radial weights $w$ in the case $1\le p<\infty$. We also notice that in every weighted Bergman space the supremum of all radii for which the principle holds is strictly smaller than one. Under the mild additional assumption $\liminf_{r\to 0^+} w(r)>0$, we show that the principle fails whenever $0<p<1$.
\end{abstract}
\maketitle
\section{Introduction}
 \label{sect-intro}
\par
\subsection{Korenblum domination (maximum) principle}
\label{subsect-K-princ}
Let $A^p$ denote the Bergman spaces of all analytic functions in the unit disk $\D$ that are $p$-integrable with respect to the normalized Lebesgue area measure $dA$. In the late 1980s, Korenblum conjectured that the following principle should be valid: there exists a radius $c\in (0,1)$ such that, for any two functions $f$, $g\in A^2$ with $|f(z)| \le |g(z)|$ in $A_c=\{z\,:\,c\le |z|<1\}$, we have $\|f\|_2\le \|g\|_2$. He proved the statement in \cite{Ko} under some additional assumptions on the zeros of $f$ and $g$. It is quite easy to formulate a  generalization of this question for any positive $p$. Various authors produced related partial results on such questions during the 1990s.
\par
In 1999, Hayman \cite{Ha} proved this conjecture for $p=2$, showing that $c=1/25=0.04$ works (and hence so does any smaller positive value). Shortly afterwards, Hinkkanen \cite{Hi}  extended the result for any $p\ge 1$ and $c=0.15724$. Their methods of proof were somewhat similar. Hinkkanen relies on one fixed estimate and therefore proved the result for the same radius, independently of the value of $p$. Further relevant technical improvements were obtained by  Schuster \cite{S} and also by Wang \cite{W1, W2, W3, W4} regarding the largest possible radius $c$.
\par
In 2018, Bo\v zin and Karapetrovi\'c \cite{BK} showed that Korenblum's principle fails for all $p$ with $0<p<1$ by exhibiting a universal counterexample. More recently, Karapetrovi\'c \cite{Ka} studied the problem for the more general mixed-norm spaces $H(p,q,\a)$ which include the Bergman spaces with standard radial weights as special cases when $p=q$. He was able to show that the principle holds when $1\le p \le q < \infty$ and fails when $0<q<1$.
\par
\subsection{Description of our results}
 \label{subsect-descr}
Karapetrovi\'c's carefully written paper \cite{Ka}, together with other earlier works, inspired our research and made us think of generalizing the results to Bergman spaces with general radial weights. In fact, it turns out that the previous results, both the positive and the negative ones, carry over with quite similar proofs but also with some further simplifications to these general cases.
\par
In the present note we show that the Korenblum principle actually holds in any weighted Bergman spaces $A^p_w$ with  a non-negative and integrable radial weight $w$ in the case $p\ge 1$ (Theorem~\ref{thm-large-p}). We also notice than in general, if the principle holds, the supremum of all admissible radii is strictly smaller than one (Proposition~\ref{prop-sup1}). Under the additional assumption $\liminf_{r\to 0^+} w(r)>0$, we show that the principle fails whenever $0<p<1$ (Theorem~\ref{thm-small-p}) and if such an assumption is omitted, it may hold even then (Example~\ref{ex-w-0}).
\par
We notice that proving these facts actually does not require changing many details from the existing discussions in the literature, something that seems to have remained unnoticed until now. All the necessary basic concepts are reviewed and detailed proofs are given in the next section.

\section{Domination principle for weighted Bergman spaces with radial weights}
 \label{sect-rad-weights}
\par
\subsection{Hardy and Bergman spaces}
\label{subsect-Hp-Ap}
It is well known \cite[Chapter~1]{D} that for a subharmonic function $g$ in the unit disk $\D$ its integral means
$$
 \int_0^{2\pi} g(re^{i\t}) \frac{d\t}{2\pi}\,, 
$$
on the circle $\{z\,:\,|z|=r\}$, $0<r<1$, are increasing functions of the radius $r$. In particular, for any  function $f$ analytic in $\D$ and $0<p<\infty$, the integral means of order $p$ of $f$:
$$
 M_p(r;f) = \( \int_0^{2\pi} |f(re^{i\t})|^p \frac{d\t}{2\pi} \)^{1/p}
$$
are increasing functions of $r\in (0,1)$. The Hardy space $H^p$ is the set of all analytic functions in $\D$ for which these means have finite limits: $\|f\|_{H^p}=\lim_{r\to 1^-}M_p(r;f)<\infty$. This is not a true norm if $0<p<1$ but the same notation is still used in this case. \par
Let $dA$ denote the normalized Lebesgue area measure: $dA = \frac{1}{\pi}\,dx\,dy = \frac{1}{\pi}\,r\,dr\,d\t$, where $z=x+iy=re^{i\t}\in\D$. For $p\in (0,\infty)$, the Bergman space $A^p$ is defined as the set of all analytic functions $f$ in $\D$ that are $p$-integrable with respect to area measure:
$$
 \|f\|_p = \( \int_\D |f|^p\,dA \)^{1/p} = \( \int_0^1 2r M_p^p(r;f)\,dr \)^{1/p} < \infty\,.
$$
Again, the same comment regarding the case $0<p<1$ as above is in place. For the theory of Bergman spaces, we refer the reader to \cite{DS} or \cite{HKZ}. 
\par
\subsection{Weighted Bergman spaces}
 \label{subsect-w-Bergman}
In this section, the term \textit{weight\/} will be used for a measurable (with respect to $dA$) and non-negative function $w$ on $\D$. (Note that non-zero constant functions belong to the space if and only if $w$ is Lebesgue integrable.) To avoid trivial situations, we also assume that $\int_\D w\,dA>0$; equivalently, $w$ is positive on some subset of positive measure of the disk. For $0<p<\infty$, the weighted Bergman space $A^p_w$ is the set of all holomorphic functions $f$ in $\D$ for which
$$
 \|f\|_{p,w} = \( \int_\D |f|^p w\,dA \)^{1/p} < \infty\,,
$$
\par
We will say that the \textit{Korenblum domination principle holds for\/} $A^p_w$ if there exists $c\in (0,1)$ such that for arbitrary functions $f$, $g\in A^p_w$ that satisfy $|f(z)|\le |g(z)|$ whenever $c\le |z|<1$, the inequality $\|f\|_{p,w}\le \|g\|_{p,w}$ holds. Any such value $c$ is said to be an \textit{admissible radius\/}. If no such $c$ exists, we will say that the \textit{Korenblum principle fails for\/} $A^p_w$.
\par
Trivially, if $c$ is an admissible radius, any smaller positive value is also admissible; hence, the set of all admissible radii is an interval. The supremum of all admissible values is usually called the \textit{Korenblum radius\/} and is sometimes denoted by $c(p)$.
\par
The exact value of $c(p)$ is not known for any $p\in (0,\infty)$, even for unweighted Bergman spaces. Wang has improved various upper and lower bounds from earlier papers in \cite{W1, W2, W3} and has also shown in \cite{W4} that $\lim_{p\to\infty} c(p)=1$, thus answering a question posed in \cite{Hi} by Hinkkanen. It is not known whether $c(p)$ is an increasing or continuous function of $p$. Such questions are certainly of interest but will not be considered in the present note.
\par
\subsection{The Korenblum radius is smaller than one for weighted Bergman spaces}
 \label{subsect-Kr<1}
Considering the Korenblum principle and Korenblum radius also makes  sense for other spaces, such as Hardy spaces or the more general mixed norm spaces. Every function $f\in H^p$ has radial limits almost everywhere and $\|f\|_{H^p}$ can be computed in terms of the modulus of its radial limits only. From here it is immediate that every $c\in (0,1)$ is an admissible radius. Therefore the Korenblum radius is equal to $1$ for all $H^p$ spaces.
\par
Our next result shows that in weighted Bergman spaces this situation is impossible. Here we will still not assume the integrability of the weight but we will consider only the cases when the space $A^p_w$ is non-trivial.
\begin{prop}\label{prop-sup1}
Let $0<p<\infty$ and let $w$ be a weight (not necessarily integrable)  and assume that $A^p_w\neq \{0\}$. If the Korenblum principle holds for such space $A^p_w$, then the corresponding Korenblum radius must satisfy $c(p)<1$.
\end{prop}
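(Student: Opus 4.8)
The plan is to show that every radius $c$ sufficiently close to $1$ fails to be admissible, which immediately gives $c(p)<1$ since the admissible radii form an interval with supremum $c(p)$. To do this I would produce, for each such $c$, a pair $f,g\in A^p_w$ with $|f|\le|g|$ on the annulus $A_c=\{z:c\le|z|<1\}$ but $\|f\|_{p,w}>\|g\|_{p,w}$. Since $A^p_w\neq\{0\}$ by hypothesis, fix once and for all some $h\in A^p_w$ with $h\not\equiv0$, and take
$$
 f=h,\qquad g(z)=\frac{z}{c}\,h(z).
$$
Then $g$ is holomorphic and $\|g\|_{p,w}^p=c^{-p}\int_\D|z|^p|h|^pw\,dA\le c^{-p}\|h\|_{p,w}^p<\infty$, so $g\in A^p_w$. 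Moreover, on $A_c$ one has $|z|/c\ge1$, so $|g(z)|=(|z|/c)\,|h(z)|\ge|h(z)|=|f(z)|$; thus the hypothesis of the domination principle is satisfied on $A_c$, and this holds for every $c\in(0,1)$.

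It then remains only to compare norms. Consider the quantity
$$
 \rho=\frac{\int_\D|z|^p|h(z)|^pw(z)\,dA(z)}{\int_\D|h(z)|^pw(z)\,dA(z)} .
$$
Its denominator is finite because $h\in A^p_w$, and it is strictly positive because the zero set of $h\not\equiv0$ has zero area while $w$ is positive on a set of positive area (here the standing assumption $\int_\D w\,dA>0$ is used), so that $|h|^pw$ is positive on a set of positive area. Since $|z|^p<1$ throughout $\D$, the same observation yields $\rho<1$. Now the desired inequality $\|f\|_{p,w}>\|g\|_{p,w}$ is equivalent to $\int_\D|h|^pw\,dA>c^{-p}\int_\D|z|^p|h|^pw\,dA$, i.e.\ to $c^p>\rho$. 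Hence every $c$ in the non-empty interval $(\rho^{1/p},1)$ is \emph{not} an admissible radius, and therefore $c(p)\le\rho^{1/p}<1$, which is the assertion. (If one wished to extract explicit upper bounds for $c(p)$ one could replace $z/c$ by $(z/c)^N$ and optimize over $N$ and over the choice of $h$, but that is not needed here.)

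I do not anticipate a genuine obstacle: the only point requiring a little care is checking that $\rho$ is well defined and that $\rho<1$, which is exactly where the non-triviality assumptions $A^p_w\neq\{0\}$ and $\int_\D w\,dA>0$ enter. The mechanism is the elementary dilation $h(z)\mapsto(z/c)\,h(z)$, which inflates $h$ on the annulus $A_c$ (so the domination hypothesis still holds there) while contracting its $w$-weighted mass over the central disk $\{|z|<c\}$ enough to lower the total norm.
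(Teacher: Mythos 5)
Your proof is correct and uses exactly the same mechanism as the paper: the test pair $h$ and $(z/c)h$, with the observation that the factor $|z|/c$ inflates the modulus on the annulus while $|z|^p<1$ strictly decreases the $w$-weighted mass. The only difference is presentational — the paper argues by contradiction along a sequence $c_n\to 1$ and passes to the limit, whereas you extract the explicit threshold $\rho^{1/p}$ directly, which in fact gives the slightly sharper quantitative statement $c(p)\le \rho^{1/p}$ for every nonzero $h\in A^p_w$.
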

\begin{proof}
Assume that, on the contrary, there exists an increasing sequence of admissible radii $(c_n)_n$ such that $\lim_{n\to\infty}c_n=1$. Given $f\in A^p_w\setminus\{0\}$, consider the functions $f_n$ given by
$$
 f_n(z) = \frac{z}{c_n} f(z) \,.
$$
Clearly, $f_n\in A^p_w\setminus\{0\}$ and $|f(z)|\le |f_n(z)|$ whenever $c_n\le |z|<1$. Since $c_n$ is an admissible radius by assumption, it follows that
$$
 \int_\D |f(z)|^p w(z)\,dA(z) \le \frac{1}{c_n^p} \int_\D |z|^p\, |f(z)|^p w(z)\,dA(z) \,.
$$
After taking the limit as $n\to\infty$, this implies that
$$
 \int_\D (1 - |z|^p) |f(z)|^p w(z)\,dA(z) \le 0 \,,
$$
which is only possible if $|f(z)|^p w(z)=0$ almost everywhere in $\D$. Since  $w(z)=0$ cannot be satisfied almost everywhere in $\D$ in view of our assumption on $w$, it follows that $f(z)=0$ on a set of positive measure in $\D$, hence $f\equiv 0$, which is absurd. This proves the claim.
\end{proof}
\par
\subsection{Radial weights}
 \label{subsect-rad-w}
From now on, we only consider integrable (non-negative) weights; hence all bounded analytic functions in $\D$ will belong to $A^p_w$.
\par
A weight is said to be \textit{radial\/} if $w(z)=w(|z|)$ for all $z\in\D$. Our earlier assumptions in this case include the condition that $w(r)>0$ for all $r$ in some subset of positive measure of $(0,1)$. In this case, the norm of a function $f\in A^p_w$ can be computed as follows:
$$
 \|f\|_{p,w} = \( \int_0^1 2r w(r) M_p^p(r;f)\,dr \)^{1/p}\,,
$$
where $M_p(r;f)$ is as before.
\par
It should be noted that $A^p_w$ may not be complete for some weights of this type (for example, if $w(r)=0$ on $(R,1)$ for some $R\in (0,1)$, as the reader can check after a certain amount of work, but this will not be relevant at all for our purpose. In other words, our discussion is independent of the completeness of the space.
\par
\subsection{The case $p\ge 1$}
 \label{subsect-p>=1}
Our first result is positive, generalizing the earlier findings for $p\ge 1$. We follow the methods of \cite{Hi} and \cite{S}, explained very well in \cite{Ka}. Notice that in our considerations we only need the simplest limit of a certain integral, without any need to compute or estimate certain values, unlike in earlier papers. This is achieved  at the expense of not giving any explicit bounds on the value of $c$ for which the principle holds.
\par
\begin{thm}\label{thm-large-p}
Let $1\le p<\infty$ and let $w$ be a radial weight. Then the Korenblum domination principle holds for the space $A^p_w$.
\end{thm}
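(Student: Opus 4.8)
The plan is to adapt the proof of Hinkkanen \cite{Hi} and Schuster \cite{S}, in the form presented in \cite{Ka}, replacing the single step where those authors need an explicit estimate by a soft limiting argument. The opening moves are formal: we may assume $g\not\equiv0$ (otherwise $|f|\le|g|$ on $A_c$ forces $f\equiv0$). On $A_c$ the hypothesis $|f|\le|g|$ forces every zero of $g$ there to be a zero of $f$ of at least the same order, so $\phi:=f/g$ is analytic on a neighbourhood of $\{c\le|z|<1\}$ with $|\phi|\le1$ there. If $g$ has no zero in the open disk $\{|z|<c\}$, then $\phi$ is analytic on all of $\D$, and the maximum principle on $\{|z|\le c\}$ (where $|\phi|\le\max_{|z|=c}|\phi|\le1$) gives $|\phi|\le1$, i.e.\ $|f|\le|g|$, throughout $\D$, whence $M_p(r;f)\le M_p(r;g)$ for every $r$ and $\|f\|_{p,w}\le\|g\|_{p,w}$. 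Thus --- exactly as in Korenblum's original formulation --- the whole difficulty lies in the zeros of $g$ inside the small disk.

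So suppose $g$ has zeros $b_1,\dots,b_m$ (with multiplicity, $m\ge1$) in $\{|z|<c\}$; let $B$ be the finite Blaschke product with precisely these zeros and set $h:=g/B$, $\psi:=fB/g$, both analytic on $\D$ (the zeros of $B$ cancel, at the $b_j$, the zeros of $g$ and the poles of $f/g$). On $A_c$ we have $|\psi|=|\phi|\,|B|\le1$. The quantitative heart is the elementary estimate
$$
\Bigl|\frac{z-b_j}{1-\bar b_jz}\Bigr|\ \le\ \frac{c+|b_j|}{1-c|b_j|}\ <\ \frac{2c}{1-c^{2}}\qquad\bigl(|z|=c,\ |b_j|<c\bigr),
$$
which, applied through the maximum principle to $B$ and to $\psi$ on $\{|z|\le c\}$, gives $|B(z)|,|\psi(z)|\le\eta_c^{\,m}\le\eta_c$ for $|z|\le c$, where $\eta_c:=\frac{2c}{1-c^{2}}\to0$ as $c\to0^{+}$, \emph{uniformly} in $m$ and in the positions of the $b_j$. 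Consequently $|f|=|\psi|\,|h|\le\eta_c|h|$ and $|g|=|B|\,|h|\le\eta_c|h|$ on $\{|z|<c\}$, so $\bigl||f|^{p}-|g|^{p}\bigr|\le\eta_c^{\,p}|h|^{p}$ there; since $|f|^{p}\le|g|^{p}$ on $A_c$ by hypothesis, integrating against $w$ gives
$$
\|f\|_{p,w}^{p}-\|g\|_{p,w}^{p}\ \le\ \eta_c^{\,p}\int_{|z|<c}|h|^{p}w\,dA\ -\ \int_{A_c}\bigl(|g|^{p}-|f|^{p}\bigr)w\,dA .
$$

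It remains to pick $c$ so small that the ``inner defect'' $\eta_c^{\,p}\int_{|z|<c}|h|^{p}w\,dA$ is always dominated by the ``annular slack'' $\int_{A_c}(|g|^{p}-|f|^{p})w\,dA$. Equivalently, if $\kappa(p,c,w)$ denotes the supremum of $\|f\|_{p,w}/\|g\|_{p,w}$ over all pairs with $g\not\equiv0$ and $|f|\le|g|$ on $A_c$ --- a quantity that is nondecreasing in $c$ and always $\ge1$ --- the principle for radius $c$ is the statement $\kappa(p,c,w)=1$, and earlier works fixed this by explicit computation, whereas here one needs only the soft fact that $\kappa(p,c,w)\to1$ as $c\to0^{+}$. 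I would argue by contradiction: if there were $\delta_{0}>0$ and $c_{n}\to0^{+}$ with pairs $f_{n},g_{n}$, normalised to $\|g_{n}\|_{p,w}=1$ and with $\|f_{n}\|_{p,w}\ge1+\delta_{0}$, the displayed inequality would force $(1+\delta_{0})^{p}-1\le\eta_{c_{n}}^{\,p}\int_{|z|<c_{n}}|h_{n}|^{p}w\,dA$, which (since $\eta_{c_{n}}\to0$) is absurd \emph{as soon as} $\int_{|z|<c_{n}}|h_{n}|^{p}w\,dA$ remains bounded. Proving that boundedness is, I expect, the main obstacle, though the ingredients are visible: $\|g_{n}\|_{p,w}=1$ gives, via monotonicity of $M_p(\cdot\,;g_{n})$, uniform bounds on $g_{n}$ on compact subsets of $\{|z|<\sup\operatorname{supp}\mu\}$ (with $d\mu(r)=2r\,w(r)\,dr$ a finite nonzero measure on $[0,1)$); since $\mu$ is atomless one has $\mu([0,c_{n}))\to0$, so the offending integral runs over a vanishing disk; and the growth of $h_{n}=g_{n}/B_{n}$ caused by the zeros of $g_{n}$ near the origin must be shown to be absorbed by $\eta_{c_{n}}^{\,p}$. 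It is here that the hypothesis $p\ge1$ genuinely enters (the analogous control breaks down for $0<p<1$, consistently with the failure of the principle there), and probably here that a normal-families compactness argument is required. Once this is settled the theorem follows --- at the price, noted already in the paper, of yielding no explicit admissible radius.
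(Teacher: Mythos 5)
There is a genuine gap, and it is exactly the one you flag yourself: the uniform control of $\int_{\{|z|<c\}}|h|^p w\,dA$. This quantity is simply not bounded in terms of $\|g\|_{p,w}$. Take $g(z)=z^m$, normalized so that $\|g\|_{p,w}=1$; then $B(z)=z^m$, $h\equiv 1/\|z^m\|_{p,w}$, and $\int_{\{|z|<c\}}|h|^pw\,dA=\mu([0,c))\,\|z^m\|_{p,w}^{-p}\to\infty$ as $m\to\infty$ for fixed $c$. Hence the inequality you derive with the factor $\eta_c^p$ (having discarded the exponent $m$) cannot close the argument, and your contradiction scheme fails because the ``offending integral'' genuinely blows up along such sequences. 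To have any chance you would need to retain $\eta_c^{pm}$ and compare its geometric decay in $m$ against the decay of $\|z^m\|_{p,w}$-type quantities, with $m$ unbounded and the zeros in arbitrary positions --- which is precisely Korenblum's original difficulty, the reason his paper needed hypotheses on the zeros, and the reason Hayman and Hinkkanen abandoned the factor-out-the-zeros strategy. A second symptom that the outline cannot be completed as written: nowhere does the hypothesis $p\ge1$ actually do any work, yet the theorem is false for $0<p<1$ (Theorem~\ref{thm-small-p}), so a correct proof must use $p\ge1$ at an identifiable step; you defer it into the same unproved boundedness claim. The normal-families suggestion does not obviously repair this, since $h_n=g_n/B_n$ need not be locally bounded when $m_n\to\infty$.

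For comparison, the paper's proof never factors out zeros. It works with $\om=f/g$ only on the annulus $\{c<|z|<1\}$, introduces $\g(\r)=\max_{|z|=\r}|\om(z)-\om(\z_\r)|/(1-|\om(z)|^2)$ where $\z_\r$ maximizes $|\om|$ on $|z|=\r$, and uses the two-sided elementary inequality $p\,y^{p-1}(x-y)\le x^p-y^p\le p\,x^{p-1}(x-y)$ for $p\ge1$ (this is exactly where the hypothesis enters) together with the subharmonicity of $|f|^{p-1}|f-\om(\z_\r)g|$ to bound the inner deficiency $\int_{\{|z|<c\}}(|f|^p-|g|^p)w\,dA$ by $2\g(\r)$ times the annular slack on each circle $|z|=\r$. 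Integrating in $\r$ reduces everything to finding $c$ with $\int_0^c 2rw(r)\,dr\le\int_c^1 \r w(\r)\g(\r)^{-1}\,d\r$, which is achieved by Schuster's explicit, $f$- and $g$-independent bound $\g(\r)\le H(\r,c)$ with $H(\r,c)\to 2\r/(1-\r^2)$ as $c\to0^+$, plus Fatou's lemma. That route delivers the uniformity in the pair $(f,g)$ that your approach is missing.
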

\begin{proof}
Let $0<c<1$ (the appropriate value is to be defined later) and consider $f$, $g\in A^p_w$ that satisfy $|f(z)|\le |g(z)|$ whenever $c\le |z|<1$.
\par
Define $\om (z) = \frac{f(z)}{g(z)}$ for $c<|z|<1$. Note that $\om$ is holomorphic in this annulus and without loss of generality we may assume that $|\om|<1$ there; otherwise $f$ and $g$ coincide and there is nothing left to prove. Also, if $\om$ is a constant, then $|f|$ is a constant multiple of $|g|$ with the constant $<1$, and the statement of the Korenblum principle follows trivially; thus, we may assume from now on that $\om$ is not constant.
\par
If it is that case that 
$$
 \int_{ \{|z|<c\} } \(|f|^p - |g|^p\) w\,dA \le 0\,,
$$
then after integrating the positive function $\(|g|^p - |f|^p\) w$ over the annulus $\{c<|z|<1\}$, simple algebra readily yields the inequality $\|f\|_{p,w}^p\le \|g\|_{p,w}^p$, and the statement of the Korenblum principle follows. Thus, from now on we may restrict our attention only to the case when
$$
 \int_{ \{|z|<c\} } \(|f|^p - |g|^p\) w\,dA > 0\,.
$$
For each $\r$ in the interval $(c,1)$ there exists a point $\z_\r$ such that
$$
 |\z_\r|=\r\,, \quad |\om(\z_\r)| = \max \{|\om (z)|\,:\,|z|=\r\}\,.
$$
Next, define
$$
 \g(\r) = \max \left\{\frac{|\om (z)-\om(\z_\r)|}{1-|\om (z)|^2}\,:\,|z|=\r\right\} \,.
$$
Note that $\g(\r)>0$ for all $\r \in (c,1)$ since $\om$ is not constant. It is important to stress that $\g$ depends on the choice of functions $f$, $g$ that satisfy our assumptions.
\par
We continue following Hinkkanen \cite{Hi} and using some basic inequalities. To this end, in the next step the assumption $p\ge 1$ is actually fundamental since we will use the elementary double inequality
\begin{equation}
 p y^{p-1} (x-y) \le x^p - y^p \le p x^{p-1} (x-y)\,, \quad x\,, y\ge 0\,, \quad p\ge 1\,.
 \label{eq-elem-ineq}
\end{equation}
\par
For $0<r<c<\r<1$, using the fact that $|\om(\z_\r)|<1$ for all $z$ with $|z|=\r$ and the second inequality in \eqref{eq-elem-ineq}, combined with the basic triangle inequality, we obtain the following chain of estimates
\begin{eqnarray*}
 M_p^p(r, f) - M_p^p (r, g) & = & \int_0^{2\pi} \( |f(r e^{i \t})|^p - |g(r e^{i \t})|^p \) \frac{d\t}{2\pi}
\\
 &\le & \int_0^{2\pi} \( |f(r e^{i \t})|^p - |\om(\z_\r) g(r e^{i \t})|^p \) \frac{d\t}{2\pi}
\\
 &\le & \int_0^{2\pi} p |f(r e^{i \t})|^{p-1} \( |f(r e^{i \t})| - |\om(\z_\r) g(r e^{i \t})| \) \frac{d\t}{2\pi}
\\
 &\le & \int_0^{2\pi} p |f(r e^{i \t})|^{p-1} \left|f(r e^{i \t}) - \om(\z_\r) g(r e^{i \t})\right| \frac{d\t}{2\pi}
\\
 &\le & \int_0^{2\pi} p |f(\r e^{i \t})|^{p-1} \left|f(\r e^{i \t}) - \om(\z_\r) g(\r e^{i \t})\right| \frac{d\t}{2\pi}\,,
\end{eqnarray*}
where in the last step we have used the fact that the function $|f|^{p-1} |f - \om(\z_\r) g|$ is subharmonic. (To check this, just note that its logarithm is clearly subharmonic since $p-1\ge 0$ and recall that the exponential function of a subharmonic function is subharmonic.)
\par
We continue the above chain of inequalities by using the definitions of $\om(z)$ and $\g(\r)$ and the first inequality in \eqref{eq-elem-ineq}:
\begin{eqnarray*}
 M_p^p(r, f) - M_p^p (r, g) &\le & \int_0^{2\pi} p |f(\r e^{i \t})|^{p-1} \( |g(\r e^{i \t})| - |f(\r e^{i \t})| \) \frac{\left|f(\r e^{i \t}) - \om(\z_\r) g(\r e^{i \t})\right|}{|g(\r e^{i \t})| - |f(\r e^{i \t})|} \frac{d\t}{2\pi}
\\
 & = & \int_0^{2\pi} p |f(\r e^{i \t})|^{p-1} \( |g(\r e^{i \t})| - |f(\r e^{i \t})| \) \frac{|\om (\r e^{i \t})-\om(\z_\r)|}{1-|\om (\r e^{i \t})|} \frac{d\t}{2\pi}
\\
 & \le & 2 \g (\r) \int_0^{2\pi} \( |g(\r e^{i \t})|^p - |f(\r e^{i \t})|^p \) \frac{d\t}{2\pi}
\\
 & = & 2 \g (\r) \(M_p^p(\r,g) - M_p^p (\r,f) \)\,.
\end{eqnarray*}
For a fixed $\r$, integrate from $0$ to $c$ with respect to $2r w(r)\,dr$:
$$
 \int_0^c \(M_p^p(r, f) - M_p^p (r, g)\)\,2r w(r)\,dr \le 2 \g (\r) \(M_p^p(\r, g) - M_p^p (\r, f)\)\, \int_0^c 2 r w(r)\,dr\,,
$$
Next, divide both sides by $\g(\r)$ and then integrate from $c$ to $1$ with respect to $\r w(\r)\,d\r$ to get
\begin{equation}
 \int_c^1 \frac{\r w(\r)}{\g (\r)}\,d\r \cdot \int_{ \{|z|<c\} } \(|f|^p - |g|^p\) w\,dA \le \int_0^c 2\,r\,w(r)\,dr \cdot \int_{ \{c<|z|<1\} } \(|g|^p - |f|^p\) w\,dA\,.
 \label{eq-key-ineq}
\end{equation}
In view of our assumption that the second factor on the left is positive, it follows that
$$
 \int_c^1 \frac{\r w(\r)}{\g (\r)}\,d\r <\infty\,.
$$
If we can find a value $c\in (0,1)$ independent of $\g$ (that is, independent of $f$ and $g$) and such that
\begin{equation}
 \int_0^c 2\,r\,w(r)\,dr \le \int_c^1 \frac{\r w(\r)}{\g (\r)}\,d\r \,,
 \label{eq-ineq}
\end{equation}
then, after a cancelation, \eqref{eq-key-ineq} will immediately yield
$$
 \|f\|_{p,w}^p = \int_{ \{|z|<c\} } |f|^p w\,dA  + \int_{ \{c<|z|<1\} } |f|^p w\,dA \le \int_{ \{|z|<c\} } |g|^p w\,dA + \int_{ \{c<|z|<1\} } |g|^p w\,dA = \|g\|_{p,w}^p\,,
$$
which will prove our claim.
\par
To complete the proof, it remains to show that \eqref{eq-ineq} holds for some $c\in (0,1)$ independent of $\g$. On the one hand, the Monotone Convergence Theorem implies that
$$
 \lim_{c\to 0^+} \int_0^c 2\,r\,w(r)\,dr = 0\,.
$$
On the other hand, if $0<c<\frac{1}{4}$ we can use fundamental  estimates due to Schuster. Specifically, in \cite[p.~3528]{S} the following function was defined:
$$
 F(\r,c) = \frac{2c}{\r} \( 1 + \frac{\r^2}{c}\) \cdot \frac{1-c^{12}}{1-c^{10}} \cdot \prod_{n=1}^5 \frac{(1+\r^2 c^{2n-1}) (1+\r^{-2} c^{2n+1}) (1+ c^{2n})^2}{(1+\r^2 c^{2n-2}) (1+\r^{-2} c^{2n}) (1+ c^{2n-1})^2}
$$
and it was shown that
$$
 \g(\r) \le \frac{F(\r,c)}{ \sqrt{1 - F(\r,c)^2} } = H(\r,c)\,.
$$
Note that $H$ is an explicit function independent of $f$ and $g$ (hence of $\g$). It also has the property that
$$
 \lim_{c\to 0^+} H(\r,c) = \frac{2\r}{1-\r^2}\,.
$$
Recalling that $w>0$ on a set of positive measure, Fatou's Lemma then implies that
$$
 0 < \frac12 \int_0^1 w(\r) (1 - \r^2)\,d\r \le \liminf_{c\to 0^+} \int_c^1 \frac{\r w(\r)}{H(\r,c)}\,d\r \le \liminf_{c\to 0^+} \int_c^1 \frac{\r w(\r)}{\g(\r)}\,d\r\,.
$$
Thus, there exists a small enough $c>0$ such that \eqref{eq-ineq} holds independently of $\g$.
\par
This shows that the Korenblum principle holds for such a radius $c$ and $A^p_w$.
\end{proof}
\par
\subsection{The case $0<p<1$}
 \label{subsect-p<1}
Next, we show that under a mild additional assumption on $w$, the Korenblum principle fails for $A^p_w$ whenever $0<p<1$. It turns out that the same example from \cite{Ka} works, with appropriate modifications of the argument employed there (and with some minor simplifications made in passing).
\par
\begin{thm}\label{thm-small-p}
Let $0<p<1$ and let $w$ be a radial weight such that $\,\liminf_{r\to 0^+} w(r)>0$. Then the Korenblum domination principle fails for the space $A^p_w$.
\end{thm}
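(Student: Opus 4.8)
The plan is to rerun the universal counterexample of Karapetrovi\'c \cite{Ka} (itself a refinement of that of Bo\v zin--Karapetrovi\'c \cite{BK}) for the weight $w$, bringing in the hypothesis $\liminf_{r\to0^+}w(r)>0$ only at the very end. Since every radius smaller than an admissible one is itself admissible, the Korenblum principle fails for $A^p_w$ as soon as arbitrarily small radii are non-admissible; hence it is enough to produce, for each sufficiently small $c\in(0,1)$, functions $f_c,g_c\in A^p_w$ with $|f_c|\le|g_c|$ on $A_c$ but $\|f_c\|_{p,w}>\|g_c\|_{p,w}$. This is exactly where the hypothesis enters: we fix $\delta>0$ and $r_0\in(0,1)$ with $w(r)\ge\delta$ on $(0,r_0)$ and only work with $c<r_0$.

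For the test functions I would take $f_c$ and $g_c$ to be the functions from \cite{Ka}, relabelled so that the transition radius is our $c$ and keeping the auxiliary parameter $\e$ that controls how close $|f_c|$ is to $|g_c|$ on $A_c$. Two features survive from the unweighted discussion untouched. First, $f_c$ and $g_c$ are bounded on $\D$, so --- because $w$ is integrable --- they automatically belong to $A^p_w$, and $\|g_c\|_{p,w}^p\le(\sup_\D|g_c|)^p\,\|1\|_{p,w}^p$. Second, the pointwise inequality $|f_c(z)|\le|g_c(z)|$ for $z\in A_c$ is purely function-theoretic and is proved exactly as in \cite{Ka}, with no weight present. (It is essential that $g_c$ vanish somewhere in $\{|z|<c\}$: otherwise $f_c/g_c$ would extend holomorphically to $\D$, and the maximum principle would force $|f_c|\le|g_c|$ on all of $\D$. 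The interior zero is where the concavity of $t\mapsto t^p$ for $p<1$ is exploited.)

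The heart of the matter is the comparison of the two weighted norms. Setting $\f(r)=M_p^p(r,f_c)-M_p^p(r,g_c)$ and passing to polar coordinates,
$$
 \|f_c\|_{p,w}^p-\|g_c\|_{p,w}^p=\int_0^c 2rw(r)\f(r)\,dr+\int_c^1 2rw(r)\f(r)\,dr .
$$
Domination on $A_c$ gives $\f(r)\le0$ for $r\in[c,1)$, so the whole positive contribution sits on $(0,c)$, where $w\ge\delta$. From the estimates in \cite{Ka} one should extract: (a) $\f(r)\ge0$ throughout $(0,c)$ and $\int_0^c 2r\f(r)\,dr\ge m_0>0$ --- the small-exponent gain near the origin, which persists even after the maximum-principle bound on $\{|z|\le c\}$; and (b) on $A_c$ one has $0\le|g_c|-|f_c|\le\e$ while $|g_c|$ is bounded below there (its only zero being interior), so concavity yields $|g_c|^p-|f_c|^p\le C\e$ with $C$ independent of $\e$, hence $\int_c^1 2rw(r)\f(r)\,dr\ge-C\e\int_\D w\,dA$. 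Combining (a) and (b),
$$
 \|f_c\|_{p,w}^p-\|g_c\|_{p,w}^p\ \ge\ \delta\,m_0-C\e\int_\D w\,dA\ >\ 0
$$
as soon as $\e$ is small enough, which is the required counterexample.

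The principal obstacle is to perform the estimates (a) and (b) inside the construction of \cite{Ka} \emph{uniformly} in the parameters involved. Mere integrability of $w$ gives no upper bound on the weight anywhere, so the annulus error must be controlled through $\sup_{A_c}(|g_c|-|f_c|)$ rather than through the size of $A_c\cap\operatorname{supp}w$; dually, the gain $m_0$ near the origin must stay bounded away from $0$ as $\e\to0$ instead of decaying at the same rate as the error. Checking that these two requirements are compatible --- that $|g_c|-|f_c|$ can be made uniformly small on $A_c$ while $\int_0^c 2r\f(r)\,dr$ is kept bounded below --- is the technical core, and it is also the reason the hypothesis $\liminf_{r\to0^+}w(r)>0$ cannot simply be dropped (compare Example~\ref{ex-w-0}).
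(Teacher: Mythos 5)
Your overall template --- reuse the Karapetrovi\'c test functions, observe that the pointwise domination on the annulus is weight-free, and bring in $\liminf_{r\to 0^+}w(r)>0$ only to bound the weight below where the ``gain'' lives --- is the same as the paper's. But the quantitative claim on which your argument pivots is not just unverified (as you concede in your last paragraph): as stated, requirements (a) and (b) are mutually incompatible, so the plan cannot be completed in this form. If $\sup_{A_c}\bigl(|g_c|-|f_c|\bigr)\le\e\to 0$ with the families uniformly bounded, then along a subsequence $f_c\to f_0$, $g_c\to g_0$ locally uniformly with $|f_0|=|g_0|$ on the annulus; hence $f_0$ is a unimodular multiple of $g_0$ there and, by analytic continuation, on all of $\D$, so $\int_{\{|z|<c\}}\bigl(|f_0|^p-|g_0|^p\bigr)w\,dA=0$. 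In particular the gain $\int_0^c 2r\f(r)\,dr$ cannot stay bounded below by a fixed $m_0>0$ as $\e\to 0$; in the actual example $f(z)=\frac{c^n}{c^n+\e^n}(z^n+\e^n)$, $g(z)=z^n$, both the gain and the loss tend to $0$ with $\e$.

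What actually closes the argument --- and what your proposal is missing --- is a \emph{rate} comparison rather than a ``bounded gain versus vanishing loss'' comparison. After normalizing, Bernoulli's inequality $\bigl(1+(\e/c)^n\bigr)^p\le 1+p(\e/c)^n$ (valid because $p\le 1$) shows the loss is $O(\e^{n})$, while the gain, concentrated on $(0,\e)$ where $M_p^p(r;z^n+\e^n)\ge\e^{np}$ beats $r^{np}$, is of order $\e^{np+2}$ (the factor $\e^{2}$ coming from the area of the disk of radius $\e$, via the substitution $r=\e\r$ and Fatou's lemma applied to $w(\e\r)$). Choosing $n$ with $n(1-p)>2$ makes $\e^{np+2}\gg\e^{n}$, and this is precisely --- and only --- where $0<p<1$ enters; your proposal never locates this mechanism, attributing the role of $p<1$ instead to concavity near the interior zero of $g$. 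A further consequence of arguing at the scale $\e$ rather than on all of $(0,c)$ is that the weight lower bound is needed only on $(0,\e)$, which is exactly what $\liminf_{r\to0^+}w(r)>0$ gives; one then handles \emph{every} $c\in(0,1)$ directly, with no need for your (otherwise correct) reduction to small $c$.
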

\begin{proof}
Let $c\in (0,1)$ be arbitrary and choose $n$ large enough so that $n(1-p)>2$. Consider the functions
$$
 f(z) = \frac{c^n}{c^n+\e^n} \(z^n+\e^n\)\,, \quad g(z)=z^n\,,
$$
where an appropriate small value of $\varepsilon\in (0,c)$ is to be chosen later. Note that if $c\le |z|<1$ then clearly
$$
 |f(z)| \le \frac{c^n}{c^n+\e^n} \(|z|^n+\e^n\) \le \frac{c^n |z|^n+|z|^n\e^n}{c^n+\e^n} = |g(z)|\,.
$$
We need to show that $\|g\|_{p,w} <  \|f\|_{p,w}$; that is,
\begin{equation}
 \frac{\(c^n+\e^n\)^p}{c^{np}} \int_0^1 2r w(r) r^{np}\,dr < \int_0^1  2r w(r) M_p^p (r;z^n+\e^n)\,dr\,.
 \label{ineq-norm}
\end{equation}
In order to prove this, define
$$
 I = \int_0^\e 2 r w(r) M_p^p (r;z^n+\e^n)\,dr\,, \quad J = \int_\e^1 2 r w(r) M_p^p (r;z^n+\e^n)\,dr\,.
$$
By subharmonicity of the function $u(z)=|z^n+\e^n|^p$ and the sub-mean value inequality, we have $M_p^p (r;z^n+\e^n) \ge \e^{np}$ for all $r\in (0,1)$, hence
\begin{equation}
 I \ge \e^{np} \int_0^\e 2 r w(r)\,dr  = I^\prime\,.
 \label{ineq-I}
\end{equation}
As was observed in \cite{Ka}, the inequality is actually strict; this can be seen quickly by resorting to the Hardy-Stein identity. However, this is not really needed for our purpose because in a chain of inequalities to be obtained later there will be another strict inequality.
\par
One can also see, as in \cite[p.~501]{Ka}, that $M_p^p (r;z^n+\e^n) \ge r^{np}$, for $\e<r<1$. Again, strict inequality can be proved but is not needed. Actually, this inequality follows from the one observed earlier: $M_p^p (r;z^n+\e^n) \ge \e^{np}$. This is easily seen as follows (by an obvious change of variable $\t \mapsto -\t$):
$$
 M_p^p (r;z^n+\e^n) = \int_{-\pi}^{\pi} |r^n e^{i n \t}+\e^n|^p \frac{d\t}{2\pi} = \int_{-\pi}^{\pi} |r^n +\e^n e^{- i n \t}|^p \frac{d\t}{2\pi} = \int_{-\pi}^{\pi} |r^n +\e^n e^{i n \t}|^p \frac{d\t}{2\pi} \ge r^{n p}
$$
as before. Thus,
\begin{equation}
 J \ge \int_\e^1 2 r w(r) r^{np} \,dr = J^\prime\,,
 \label{ineq-J}
\end{equation}
In view of \eqref{ineq-I} and \eqref{ineq-J}, by Bernoulli's elementary inequality:
$$
 \frac{\(c^n+\e^n\)^p}{c^{np}} = \( 1 + \(\frac{\e}{c}\)^n\)^p \le 1 + p \(\frac{\e}{c}\)^n\,, \quad 0<p<1\,,
$$
we see that, in order to obtain inequality \eqref{ineq-norm}, it suffices to show that
\begin{equation}
 I^\prime + J^\prime > \( 1 + p \(\frac{\e}{c}\)^n \) \int_0^1 2 r w(r) r^{np}\,dr\,.
 \label{ineq-stronger}
\end{equation}
The last inequality is equivalent to
$$
 \e^{np} \int_0^\e 2 r w(r) \,dr + \int_\e^1 2 r w(r) r^{np} \,dr > \( 1 + p \(\frac{\e}{c}\)^n \) \int_0^1 2 r w(r) r^{np}\,dr\,,
$$
which is the same as
\begin{equation}
 \e^{np} \int_0^\e 2 r w(r)  \,dr - \int_0^\e 2 r w(r) r^{np} \,dr > p \(\frac{\e}{c}\)^n \int_0^1 2 r w(r) r^{np}\,dr\,.
 \label{ineq-equiv}
\end{equation}
By putting together the two integrals on the left-hand side of \eqref{ineq-equiv} and employing the obvious change of variable $r=\e \r$, we see that \eqref{ineq-equiv} amounts to
$$
 \e^{np+2} \int_0^1 (1 - \r^{np}) 2 \r w(\e \r) \,d\r > p \(\frac{\e}{c}\)^n \int_0^1 2 r w(r) r^{np}\,dr\,.
$$
Changing back $\r$ to $r$, this shows that the desired inequality \eqref{ineq-stronger} can be rewritten as
\begin{equation}
 \int_0^1 (1 - r^{np}) 2 r w(\e r) \,dr > \frac{p}{c^n} \e^{n (1-p)-2} \int_0^1 2 r w(r) r^{np}\,dr\,.
 \label{ineq-final}
\end{equation}
In view of our initial assumption on $n$, the right-hand side of \eqref{ineq-final} tends to zero as $\e\to 0^+$. Without using any assumption on the continuity of $w$, one can show by elementary calculus that
$$
 \liminf_{\e\to 0^+} w(\e r) = \liminf_{s\to 0^+} w(s)
$$
for every fixed $r\in (0,1)$. (Just consider the appropriate sequences to see that each quantity is greater than or equal to the other.) Thus, taking the $\liminf_{\e\to 0^+}$ of the left-hand side of \eqref{ineq-final}, by Fatou's lemma we obtain that
$$
 \liminf_{\e\to 0^+} \int_0^1 (1 - r^{np}) 2 r w(\e r) \,dr \ge \liminf_{s\to 0^+} w(s) \int_0^1 (1 - r^{np}) 2 r \,dr = \frac{np}{np+2} \liminf_{s\to 0^+} w(s) >0\,.
$$
This shows that we can choose a small enough positive $\e$ so that \eqref{ineq-final} and therefore \eqref{ineq-stronger} holds, which proves \eqref{ineq-norm}.
\end{proof}
Finally, it should be noted that, in order for the Korenblum principle to fail for $0<p<1$, an assumption like $\,\liminf_{r\to 0^+} w(r)>0$ on the weight cannot be dispensed with, as the following simple example shows.
\begin{ex}\label{ex-w-0}
Let
$$
 w(r) =
 \left\{
  \begin{array}{ll}
    0 & \mbox{if \ $0\le r<R$}, \\
    1 & \mbox{if \ $R\le r<1$}\,.
  \end{array}
 \right.
$$
Then $w$ clearly does not satisfy the condition $\,\liminf_{r\to 0^+} w(r)>0$, yet the Korenblum principle trivially holds for $A^p_w$ for every $p>0$ with $c=R$ since
$$
 \|f\|_{p,w} = \( \int_R^1 2r w(r) M_p^p(r;f)\,dr \)^{1/p} \,.
$$
\end{ex}
\par
\subsection{Conclusions}
 \label{subsect-concl}
It should be pointed out that the main credit for our findings belongs to the previous authors: to Hayman \cite{Ha} and Hinkkanen \cite{Hi} for devising the method used in the proof of Theorem~\ref{thm-large-p} and to Schuster \cite{S} for making clear the estimates used for all values of $p$, as well as to Bo\v zin and Karapetrovi\'c's \cite{BK} for producing a single and simple counterexample that works for all $p\in (0,1)$, even in the general context of our Theorem~\ref{thm-small-p}.  Also, Karapetrovi\'c very clear exposition in \cite{Ka} motivated this work.
\par
Our contribution consists primarily in realizing that the estimates obtained by the other authors actually work in the context of general Bergman spaces with radial weights in place of the spaces with standard radial weights, and also in noticing that all the estimates work without having to estimate any values of the Beta function that come  up in the computation for the standard cases.
\par
Our findings seem to show that the Korenblum principle is a purely classical analysis statement that only requires standard estimates on integral means, without resorting to any abstract context or ``soft analysis'' methods, and its validity or failure are independent of the question of completeness of the weighted Bergman space considered, which is another remarkable fact.


\end{document}